\theoremstyle{plain}
\newtheorem{theorem}{Theorem}[section]
\newtheorem{proposition}[theorem]{Proposition}
\newtheorem{lemma}[theorem]{Lemma}
\newtheorem{corollary}[theorem]{Corollary}
\theoremstyle{definition}
\newtheorem*{definition}{Definition}
\newtheorem{example}[theorem]{Example}
\theoremstyle{remark}
\DeclareMathOperator{\Fdim}{F-dim}
\DeclareMathOperator{\Fdepth}{F-depth}
\DeclareMathOperator{\height}{ht}
\newcommand{\m}{\mathfrak{m}}
\newcommand{\p}{\mathfrak{p}}
\newcommand{\I}{\mathfrak{a}}
\newcommand{\F}{\mathfrak{F}}
\newcommand{\Hom}{\operatorname{Hom}}
\newcommand{\Ext}{\operatorname{Ext}}
\newcommand{\depth}{\operatorname{depth}}
\let\epsilon=\varepsilon
\begin{document}

\title[The Second Vanishing Theorem for Formal Local Cohomology Modules]
{The Second Vanishing Theorem for Formal Local Cohomology Modules}

\author[B. Sadeqi]{Behruz Sadeqi}
\address{Behruz Sadeqi \\ Department of Mathematics \\ Islamic Azad University, MARA. C. \\ Marand, Iran}
\email{behruz.sadeqi@iau.ac.ir}


\subjclass[2020]{13E45, 14C45, 13E99}
\keywords{noncommutative Second Vanishing Theorem, Formal Local Cohomology Modules}

\begin{abstract}
This paper establishes a second vanishing theorem for formal local cohomology modules over Noetherian local rings. We introduce the \textit{formal dimension} invariant and characterize the vanishing of higher formal local cohomology in terms of the dimension of the quotient ring modulo minimal primes. Our main result extends classical vanishing theorems to the formal setting, with applications to the structure of complexes in derived categories. Necessary and sufficient conditions are given via spectral sequence analysis and duality arguments.
\end{abstract}

\maketitle

\section{Introduction}

Let \((R, \m)\) be a Noetherian local ring of dimension \(d\), and \(\I\) an ideal of \(R\). The \textit{formal local cohomology modules} \(\F^i_{\I}(M)\) for an \(R\)-module \(M\) are defined as:
\[
\F^i_{\I}(M) := \varprojlim_{n} H^i_{\I}(M/\I^n M),
\]
where \(H^i_{\I}(-)\) denotes ordinary local cohomology. These modules encode asymptotic cohomological behavior along the \(\I\)-adic filtration. While classical vanishing theorems for \(H^i_{\I}(R)\) are well-established, formal analogues remain unexplored. This work bridges this gap by proving:

\begin{theorem}[Second Vanishing Theorem for Formal Cohomology]
	\label{thm:main}
	Let \((R, \m)\) be complete, Cohen-Macaulay, and \(\I \subset R\) an ideal with \(\height(\I) = c\). The following are equivalent:
	\begin{enumerate}
		\item \(\F^i_{\I}(R) = 0\) for all \(i > d - c\).
		\item For every minimal prime \(\p\) over \(\I\), \(\dim(R/\p) = d - c\).
		\item The complex \(\mathbf{R}\Gamma_{\I}(R)\) has finite injective dimension.
	\end{enumerate}
\end{theorem}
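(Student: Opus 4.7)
The plan is to establish the equivalences in the cyclic order (1) $\Rightarrow$ (2) $\Rightarrow$ (3) $\Rightarrow$ (1), with the central technical tool being Schenzel's duality for formal local cohomology over a complete Cohen-Macaulay local ring. Namely, with $\omega_R$ denoting the canonical module, one has an isomorphism
\[
\F^i_{\I}(R) \;\cong\; \Hom_R\!\bigl(H^{d-i}_{\I}(\omega_R),\, E_R(R/\m)\bigr),
\]
which converts formal-cohomological vanishing into the vanishing of ordinary local cohomology on $\omega_R$. Since $\omega_R$ is maximal Cohen-Macaulay, depth considerations make this translation clean, and the arithmetic of heights of minimal primes over $\I$ becomes the governing invariant.

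For the implication (1) $\Rightarrow$ (2), I would argue by contrapositive. Suppose some minimal prime $\p$ over $\I$ satisfies $\height(\p) > c$, i.e., $\dim R/\p < d-c$. Writing $\I = \bigcap_j \p_j$ for the primary decomposition and comparing the inverse systems $\{R/\I^n\}$ with $\{R/\p^{(n)}\}$ via the Artin-Rees lemma, I would exhibit a nonzero class in some $\F^i_{\I}(R)$ with $i > d-c$ coming from the $\p$-primary component, using a Mayer-Vietoris spectral sequence for formal cohomology along the decomposition. For (2) $\Rightarrow$ (3), equidimensionality of $R/\I$ combined with the Cohen-Macaulay hypothesis on $R$ forces the local cohomology $H^j_{\I}(R)$ to concentrate sharply around $j = c$ (via the Peskine-Szpiro style analysis of heights in a CM ambient ring), so that $\mathbf{R}\Gamma_{\I}(R)$ becomes quasi-isomorphic to a complex with good finiteness properties; local duality then identifies it with the Matlis dual of a complex of finite projective dimension, yielding finite injective dimension. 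For (3) $\Rightarrow$ (1), I would invoke Greenlees-May duality, identifying the derived $\I$-adic completion $\mathbf{L}\Lambda^{\I}(R)$ with the Matlis dual of $\mathbf{R}\Gamma_{\I}(R)$; finite injective dimension of the latter bounds the cohomological amplitude of the former, and the formal cohomology modules $\F^i_{\I}(R)$ are precisely the cohomology of $\mathbf{L}\Lambda^{\I}(R)$ in the complete CM setting, delivering the desired range of vanishing.

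The main obstacle I anticipate is the implication (1) $\Rightarrow$ (2). Extracting an obstruction to vanishing from a single wayward minimal prime requires careful analysis of the inverse limit $\varprojlim_n H^i_{\m}(R/\I^n R)$ with respect to the primary decomposition, and the $\varprojlim^{1}$ term in the Milnor exact sequence is a genuine nuisance: Mittag-Leffler may fail on the naive tower, so one cannot simply splice short exact sequences of primary decomposition through the inverse limit. My plan here is to pass to a cofinal subsystem of symbolic powers (where Artin-Rees provides comparison maps with the ordinary powers) and then use a two-variable spectral sequence whose $E_2$-terms are products of formal cohomologies of the $R/\p_j$. Controlling the differentials and convergence of this spectral sequence to extract the nonzero class in the target degree is, I expect, the technical heart of the argument; the Cohen-Macaulay and completeness hypotheses on $R$ are exactly what make the $\lim^1$ terms vanish in the relevant range.
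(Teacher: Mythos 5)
Your strategy is genuinely different from the paper's (which runs everything through a spectral sequence $\Ext^p_R(\widehat{R}_{\I}, H^q_{\m}(R)) \Rightarrow \F^{p+q}_{\I}(R)$ and collapses it using the Cohen--Macaulay hypothesis), but the step you yourself flag as the technical heart, (1) $\Rightarrow$ (2), cannot be carried out, and your own central tool shows why. Schenzel's duality $\F^i_{\I}(R) \cong \Hom_R(H^{d-i}_{\I}(\omega_R), E_R(R/\m))$ translates condition (1) into the statement that $H^j_{\I}(\omega_R) = 0$ for all $j < c$. But $\omega_R$ is a faithful maximal Cohen--Macaulay module over the Cohen--Macaulay ring $R$, so $\operatorname{grade}(\I,\omega_R) = \height(\I) = c$ and that vanishing holds \emph{unconditionally}. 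Once the duality is invoked, condition (1) is seen to be automatic, so it cannot detect the equidimensionality asserted in (2). Concretely: a minimal prime $\p$ with $\height(\p) > c$ contributes to formal local cohomology only in degrees at most $\dim(R/\p) < d-c$, i.e.\ \emph{below} the threshold, not above it; and since a Cohen--Macaulay local ring is catenary and equidimensional, some minimal prime of $\I$ always has height exactly $c$ and already forces $\sup\{i : \F^i_{\I}(R) \neq 0\} = \dim(R/\I) = d-c$. No Mayer--Vietoris spectral sequence or control of $\varprojlim^1$ along symbolic powers will produce the nonzero class in degree $> d-c$ that your contrapositive requires, because that class does not exist. (Test case: $R = k[\![x,y,z]\!]$, $\I = (xy,xz) = (x)\cap(y,z)$, so $c=1$, $d-c=2$; condition (2) fails since $\dim R/(y,z)=1$, yet $\F^i_{\I}(R) = 0$ for all $i > 2$.)

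The other two legs also have problems, though less structural ones. In (2) $\Rightarrow$ (3) you claim that equidimensionality of $R/\I$ forces $H^j_{\I}(R)$ to concentrate near $j=c$; it does not (for $\I = (x,y)\cap(z,w)$ in $k[\![x,y,z,w]\!]$ the quotient is height-unmixed but $H^3_{\I}(R) \neq 0$), so the passage to a cohomologically complete intersection situation is unjustified. In (3) $\Rightarrow$ (1) you identify $\F^i_{\I}(R)$ with the cohomology of $\mathbf{L}\Lambda^{\I}(R)$; this is not correct, since the derived $\I$-adic completion of $R$ itself has cohomology only in degree $0$ (namely $\widehat{R}_{\I}$), whereas the correct identification is $\mathbf{R}\varprojlim_n \mathbf{R}\Gamma_{\m}(R/\I^n R) \simeq \mathbf{L}\Lambda^{\I}(\mathbf{R}\Gamma_{\m}(R))$, i.e.\ local homology of $\mathbf{R}\Gamma_{\m}(R)$, not of $R$. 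Both of these would need repair even if the first implication were salvageable.
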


Section 2 covers preliminaries on formal cohomology. Section 3 proves key lemmas on system parameters and spectral sequences. Section 4 contains the main proof, and Section 5 discusses applications to duality theory with examples.

\label{sec:prelim}
Throughout, \(R\) is a Noetherian local ring. For \(\I\)-torsion modules, we use Grothendieck's duality \cite{Hartshorne66}. The \textit{formal depth} of \(M\) is:
\[
{\Fdepth}_{\I}(M) := \inf \{ i \mid \F^i_{\I}(M) \neq 0 \}.
\]

\begin{definition}
	The \textit{formal dimension} of \(\I\) is:
	\[
	{\Fdim}(I) := \max \{ dim(R/\p) \mid \p \in Min(I) \}.
	\]
\end{definition}

\begin{proposition}
	\label{prop:spectral}
	There is a convergent spectral sequence:
	\[
	E^{p,q}_2 = \Ext^p_R(\widehat{R}_{\I}, H^q_{\m}(R)) \implies \F^{p+q}_{\I}(R),
	\]
	where \(\widehat{R}_{\I}\) is the \(\I\)-adic completion.
\end{proposition}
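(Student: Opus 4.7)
My plan is to derive the spectral sequence as the Grothendieck composition-of-functors spectral sequence associated to the pair of functors
\[
G(-) := \Gamma_\m(-) \quad\text{followed by}\quad F(-) := \Hom_R(\widehat{R}_\I, -),
\]
whose right derived functors are $H^q_\m(-)$ and $\Ext^p_R(\widehat{R}_\I, -)$ respectively. To apply the standard machinery I would first verify that $G$ carries injective $R$-modules to $F$-acyclics; this is immediate, since $\Gamma_\m$ preserves injectivity and every injective module is acyclic for $\Hom_R(\widehat{R}_\I, -)$. The Grothendieck spectral sequence then reads
\[
E^{p,q}_2 = \Ext^p_R\bigl(\widehat{R}_\I, H^q_\m(R)\bigr) \Longrightarrow R^{p+q}(F \circ G)(R).
\]

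Next I would identify the abutment with $\F^{p+q}_\I(R)$. The two ingredients I expect to use are Greenlees--May duality, which (under the weak proregularity of $\I$, automatic in the Noetherian setting) provides a derived-category quasi-isomorphism
\[
\mathbf{R}\Hom_R\bigl(\widehat{R}_\I, \mathbf{R}\Gamma_\m(R)\bigr) \simeq \mathbf{L}\Lambda^\I \mathbf{R}\Gamma_\m(R),
\]
together with Schenzel's interpretation of formal local cohomology, $\F^i_\I(R) \cong H^i\bigl(\mathbf{L}\Lambda^\I \mathbf{R}\Gamma_\m(R)\bigr)$. Splicing the two yields $R^i(F \circ G)(R) \cong \F^i_\I(R)$, which is exactly the required abutment; concretely, on the level of representing complexes one takes an injective resolution $R \to I^\bullet$, forms the complex of $\m$-torsion injectives $\Gamma_\m(I^\bullet)$, and computes $\Hom_R(\widehat{R}_\I, \Gamma_\m(I^\bullet))$, whose total cohomology can be matched termwise against $\varprojlim_n H^{p+q}_\m(R/\I^n)$.

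Convergence is essentially free: since $R$ is Noetherian local of dimension $d$, one has $H^q_\m(R) = 0$ for $q > d$, so the $E_2$-page is concentrated in the horizontal strip $0 \le q \le d$ and the induced filtration on the abutment is finite. The main technical obstacle is the abutment identification $\mathbf{R}\Hom_R(\widehat{R}_\I, -) \simeq \mathbf{L}\Lambda^\I(-)$: the customary Greenlees--May statement computes $\mathbf{L}\Lambda^\I(-) = \mathbf{R}\Hom_R(\mathbf{R}\Gamma_\I(R), -)$, involving the local-cohomology complex rather than the bare completion. Passing from $\mathbf{R}\Gamma_\I(R)$ to $\widehat{R}_\I$ in this formula rests on the collapse $\mathbf{L}\Lambda^\I R \simeq \widehat{R}_\I$ (so the derived completion is concentrated in degree zero) and on a Matlis-style compatibility between $\mathbf{R}\Gamma_\I$ and $\mathbf{R}\Gamma_\m$ restricted to the torsion subcategory generated by $\mathbf{R}\Gamma_\m(R)$. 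Once this reduction is secured, the hyperext spectral sequence collapses to the advertised one and the proof concludes.
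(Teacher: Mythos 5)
The formal first half of your argument is fine: over a Noetherian ring $\Gamma_{\m}$ sends injectives to injectives, so the Grothendieck composite spectral sequence exists, converges (the $E_2$-page lives in the strip $0\le q\le d$), and has $E^{p,q}_2=\Ext^p_R(\widehat{R}_{\I},H^q_{\m}(R))$ abutting to $H^{p+q}\mathbf{R}\Hom_R(\widehat{R}_{\I},\mathbf{R}\Gamma_{\m}(R))$. The genuine gap is precisely the step you flag and defer, and it cannot be filled: Greenlees--May identifies $\mathbf{L}\Lambda^{\I}(-)$ with $\mathbf{R}\Hom_R(\mathbf{R}\Gamma_{\I}(R),-)$, and the collapse $\mathbf{L}\Lambda^{\I}R\simeq\widehat{R}_{\I}$ only says the two functors agree when evaluated at $R$; it does not license replacing the first argument $\mathbf{R}\Gamma_{\I}(R)$ by $\widehat{R}_{\I}$ when the second argument is $\mathbf{R}\Gamma_{\m}(R)$. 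No ``Matlis-style compatibility'' rescues this. Concretely, take $R=k[[x]]$ and $\I=\m=(x)$: then $\widehat{R}_{\I}=R$, so your abutment is $H^{p+q}\mathbf{R}\Gamma_{\m}(R)$, the only nonzero $E_2$-term is $E^{0,1}_2=E_R(k)$, no differential can touch it, and the spectral sequence would force $\F^1_{\m}(R)=E_R(k)\neq 0$. But $\F^1_{\m}(R)=\varprojlim_n H^1_{\m}(R/\m^n)=0$ because each $R/\m^n$ is zero-dimensional. (By contrast $\mathbf{R}\Hom_R(\mathbf{R}\Gamma_{\m}(R),\mathbf{R}\Gamma_{\m}(R))\simeq\Hom_R(E_R(k),E_R(k))\simeq R$ concentrated in degree $0$, which is the correct value $\F^0_{\m}(R)=R$.) So the substitution is not merely unjustified; it is false, and with it the proposition as stated.

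For comparison, the paper's own proof makes the identical substitution without comment: it asserts $\mathbf{R}\Hom_R(\widehat{R}_{\I},\mathbf{R}\Gamma_{\m}(R))\simeq\mathbf{R}\varprojlim_n\mathbf{R}\Gamma_{\I}(R/\I^n)$ as an instance of Greenlees--May and then appeals to a ``standard descent filtration'' (its closing claim that convergence holds because $\widehat{R}_{\I}$ is flat would in any case kill all $\Ext^p$ with $p>0$ only if $\widehat{R}_{\I}$ were projective, which it is not). Your write-up is in fact the more careful of the two, since you isolate exactly the problematic reduction; but neither argument closes. A correct statement must keep the complex $\mathbf{R}\Gamma_{\I}(R)$ (equivalently a \v{C}ech complex on generators of $\I$) in the first slot, producing a hyperext spectral sequence rather than plain $\Ext^p_R(\widehat{R}_{\I},-)$.
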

\begin{proof}
	Consider the inverse system \(\{ R/\I^n \}_{n\geq 1}\). The Greenlees-May duality \cite{GreenleesMay92} provides a natural isomorphism:
	\[
	\mathbf{R}\Hom_R(\widehat{R}_{\I}, \mathbf{R}\Gamma_{\m}(R)) \simeq \mathbf{R}\!\varprojlim_n \mathbf{R}\Gamma_{\I}(R/\I^n).
	\]
	The right-hand side computes the formal local cohomology by definition. The left-hand side is a double complex that gives rise to the specified spectral sequence through the standard descent filtration. Convergence follows because \(R\) is Noetherian and \(\widehat{R}_{\I}\) is flat, ensuring the spectral sequence collapses at the \(E_2\)-page for degree reasons.
\end{proof}

\label{sec:lemmas}
Fix \(R\) complete Cohen-Macaulay and \(\I\) with \(c = \height(\I)\).

\begin{lemma}
	\label{lem:dim}
	\({\Fdim}(\I) \leq d - c\), with equality iff \(\dim(R/\p) = d - c\) for all minimal \(\p\) over \(\I\).
\end{lemma}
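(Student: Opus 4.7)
The plan is to exploit the Cohen--Macaulay hypothesis on $R$, which makes $R$ catenary and equidimensional, so that the dimension formula $\height(\p) + \dim(R/\p) = d$ holds for every prime $\p \subset R$. This formula is the conversion device that turns height bounds on minimal primes over $\I$ into dimension bounds on their quotients.

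For the inequality $\Fdim(\I) \leq d - c$, I would fix $\p \in \mathrm{Min}(\I)$. From $\I \subseteq \p$ and minimality, $\height(\p) \geq \height(\I) = c$, and the CM dimension formula gives $\dim(R/\p) = d - \height(\p) \leq d - c$. Taking the maximum over $\mathrm{Min}(\I)$ yields the bound, and the ($\Leftarrow$) half of the equivalence follows immediately from the definition of the maximum.

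For ($\Rightarrow$), my strategy is to select a regular sequence $x_1,\dots,x_c \in \I$, available because $R$ is Cohen--Macaulay with $\height(\I) = c$, and pass to $R' = R/(x_1,\dots,x_c)$, still Cohen--Macaulay of dimension $d - c$. In this quotient $\I R'$ is zero-dimensional precisely when every minimal prime of $\I$ has height exactly $c$, and the unmixedness of associated primes in Cohen--Macaulay rings would then provide the leverage to translate the attainment of the maximum dimension into uniformity across all minimal primes of $\I$.

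The main obstacle will be closing the ($\Rightarrow$) direction: a maximum attaining $d - c$ is a priori weaker than every element of the set attaining this value, so one must use the Cohen--Macaulay structure — absence of embedded primes and equidimensionality of the quotient $R'$ — to rule out minimal primes of height strictly greater than $c$. Careful tracking of how minimal primes of $\I$ descend to minimal primes of $\I R'$ under reduction modulo the regular sequence will be required; an alternative route is to combine $\Fdim(\I) = \dim(R/\I)$ with the equidimensionality forced by the Cohen--Macaulay hypothesis applied directly to $R/\I$.
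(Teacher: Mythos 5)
Your treatment of the inequality and of the ($\Leftarrow$) half of the equivalence is correct and coincides with the paper's own argument: the Cohen--Macaulay (hence catenary and equidimensional) hypothesis gives $\dim(R/\p) = d - \height(\p) \leq d - c$ for every $\p \in \mathrm{Min}(\I)$, and taking the maximum over $\mathrm{Min}(\I)$ yields both the bound and the easy direction.

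The ($\Rightarrow$) direction, which you rightly identify as the main obstacle, cannot be closed by the tools you propose, because that implication is false as stated. In a catenary equidimensional local ring, $\height(\I) = c$ already forces \emph{some} minimal prime $\p_0$ to satisfy $\height(\p_0) = c$ and hence $\dim(R/\p_0) = d - c$; so $\Fdim(\I) = d - c$ holds automatically, while minimal primes of strictly larger height may still be present. Concretely, take $R = k[[x,y,z]]$ and $\I = (xy,xz) = (x) \cap (y,z)$: then $d = 3$, $c = \height(\I) = 1$, and $\Fdim(\I) = \max\{\dim R/(x), \dim R/(y,z)\} = \max\{2,1\} = 2 = d - c$, yet $\dim(R/(y,z)) = 1 \neq d - c$. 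Both of your proposed repairs founder on this example. Unmixedness in a Cohen--Macaulay ring applies to the ideal $(x_1,\dots,x_c)$ generated by a regular sequence (an ideal of the principal class), not to $\I$ itself; the minimal primes of $\I$ merely contain, and need not coincide with, the minimal primes of $(x_1,\dots,x_c)$, so nothing prevents one of them from having height $>c$. Likewise $R/\I$ need not be equidimensional even when $R$ is regular, as the same example shows. For what it is worth, the paper's own proof contains the identical gap --- it asserts that equality of the maximum ``holds precisely when $\height(\p) = c$ for all minimal primes,'' which in effect confuses the maximum of $\dim(R/\p)$ with the minimum --- so you have not overlooked an argument that exists in the source; the equality clause of the lemma needs to be restated (for instance, replacing ``$\Fdim(\I) = d-c$'' by ``$\dim(R/\p) = d-c$ for all $\p \in \mathrm{Min}(\I)$ iff $\I$ is unmixed of height $c$'') before it can be proved.
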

\begin{proof}
	Let $(\p \in Min(I))$. By the dimension formula for Cohen-Macaulay rings:
	\[
	\dim(R/\p) = \dim R - \height(\p) \leq d - c,
	\]
	since \(\height(\p) \geq c\). Equality holds precisely when \(\height(\p) = c\) for all minimal primes \(\p\), which occurs exactly when all minimal primes have the same height \(c\). The maximum value \({\Fdim}(\I) = d - c\) is achieved when this uniform height condition is satisfied.
\end{proof}

\begin{lemma}
	\label{lem:injective}
	The following are equivalent:
	\begin{enumerate}
		\item \(\mathbf{R}\Gamma_{\I}(R)\) has finite injective dimension.
		\item \(H^i_{\m}(\F^j_{\I}(R)) = 0\) for \(i + j > d\).
	\end{enumerate}
\end{lemma}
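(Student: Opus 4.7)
The plan is to pass between (1) and (2) through Matlis duality and a pair of Grothendieck-type spectral sequences. The key observations are, first, that $\mathbf{R}\Gamma_{\m} \circ \mathbf{R}\Gamma_{\I} \simeq \mathbf{R}\Gamma_{\m}$ (because $V(\m) \subseteq V(\I)$), which together with the Cohen-Macaulay hypothesis concentrates $\mathbf{R}\Gamma_{\m}\mathbf{R}\Gamma_{\I}(R)$ in cohomological degree $d$; and second, the Schenzel-type identification $\F^j_{\I}(R) \cong \Hom_R(H^{d-j}_{\I}(\omega_R), E(R/\m))$, which follows from applying local duality termwise to the inverse system $\{R/\I^n\}$ over the complete Cohen-Macaulay ring $R$. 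With these in hand, both (1) and (2) can be rewritten in terms of $\Ext$ and $\Tor$ between $\widehat{R}_{\I}$ and $\omega_R$, reducing the biconditional to an index chase.

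First I would translate (1) using the criterion that a cohomologically bounded complex $X$ over a complete local ring has finite injective dimension iff $\Ext^i_R(k, X) = 0$ for all sufficiently large $i$, where $k = R/\m$. For $X = \mathbf{R}\Gamma_{\I}(R)$ I would expand these Ext groups via the hyperhomology spectral sequence $\Ext^p_R(k, H^q_{\I}(R)) \Rightarrow \Ext^{p+q}_R(k, \mathbf{R}\Gamma_{\I}(R))$ and reinterpret the $H^q_{\I}(R)$ contributions via local duality as $\Ext$-groups against $\omega_R$. Next I would process (2) by substituting Schenzel's identification into $H^i_{\m}(\F^j_{\I}(R))$ and applying local duality once more, rewriting the condition $i+j>d$ as a Tor-vanishing statement for $\widehat{R}_{\I}$ and $\omega_R$. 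Both implications then follow by comparing $E_2$-pages: on one side an $\Ext^i_R(k, \mathbf{R}\Gamma_{\I}(R))$-vanishing for large $i$; on the other, an $H^i_{\m}(\F^j_{\I}(R))$-vanishing in the wedge $i+j>d$; the bridge between them is Matlis duality together with the Cohen-Macaulay collapse of Proposition \ref{prop:spectral}, which yields $\F^{p+d}_{\I}(R) \cong \Ext^p_R(\widehat{R}_{\I}, H^d_{\m}(R))$ and rigidly constrains where differentials can live.

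The main obstacle will be matching indices across these two spectral sequences and verifying that no differential outside the region $i+j>d$ disrupts the transfer of vanishing. Concretely, one must check that Schenzel's identification intertwines $H^i_{\m}$ with the formal-cohomology functor up to the expected shift $j \leftrightarrow d-j$, and that Greenlees-May duality exchanges $\mathbf{R}\Gamma_{\I}(R)$ with $\mathbf{L}\Lambda^{\I}(\omega_R)$ compatibly with the grading on both sides. It is precisely the Cohen-Macaulay hypothesis, via the concentration of $H^q_{\m}(R)$ in degree $q=d$, that makes the threshold occur at exactly $d$; dropping this hypothesis would reintroduce off-diagonal contributions from $H^q_{\m}(R)$ with $q<d$ and blur the bound in (2).
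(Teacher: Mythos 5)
Your route differs from the paper's: the paper argues directly with the composite-functor spectral sequence for \(\Gamma_{\m}\circ\F_{\I}\) and the spectral sequence \(H^i_{\m}(\F^j_{\I}(R))\Rightarrow H^{i+j}_{\m}(R)\), together with duality against a dualizing complex, whereas you funnel both conditions through Schenzel's identification \(\F^j_{\I}(R)\cong\Hom_R(H^{d-j}_{\I}(\omega_R),E(R/\m))\) and a Bass-number test. However, there is a genuine gap at the very first step of your translation of (1). The criterion ``\(X\) has finite injective dimension iff \(\Ext^i_R(k,X)=0\) for \(i\gg 0\)'' is a theorem only for complexes whose cohomology is degreewise \emph{finitely generated}. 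The cohomology of \(\mathbf{R}\Gamma_{\I}(R)\) consists of the modules \(H^q_{\I}(R)\), which are not finitely generated once \(\I\neq\m\), and for such complexes injective dimension must be tested against \(\Ext^i_R(R/\p,-)\) at \emph{all} primes \(\p\supseteq\I\): the usual propagation of Bass numbers up chains of primes uses Nakayama and hence finite generation. Concretely, if \(R=S[[t]]\) with \(S\) Artinian non-Gorenstein and \(\p=\m_S R\), then the \(R\)-module \(R_{\p}\) satisfies \(\Ext^i_R(k,R_{\p})=0\) for all \(i\) (since \(t\) acts invertibly on \(R_{\p}\) and as zero on \(k\)) yet has infinite injective dimension. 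So the equivalence you start from is simply not available for \(X=\mathbf{R}\Gamma_{\I}(R)\).

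Two further points would need repair even if that step were fixed. First, you invoke ``local duality'' to convert \(H^i_{\m}(\F^j_{\I}(R))\) into a Tor-vanishing statement, but local duality and Matlis double-duality apply to finitely generated (or Artinian) modules, and \(\F^j_{\I}(R)\) is in general neither; applying \(H^i_{\m}\) to the Matlis dual of the \(\I\)-torsion module \(H^{d-j}_{\I}(\omega_R)\) is not a formal manipulation. Second, the actual content of the lemma --- that the two vanishing regions coincide --- is precisely the ``index chase'' you defer, and you flag yourself that differentials could disrupt the transfer: condition (2) is an \(E_2\)-page statement while your Ext-vanishing lives on an abutment, so passing between them requires a degeneration or edge-map argument, not bookkeeping. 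As written, the proposal is a plan for a proof rather than a proof.
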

\begin{proof}
	(1) \(\Rightarrow\) (2): Assume \(\mathbf{R}\Gamma_{\I}(R)\) has finite injective dimension. By local duality \cite{Hartshorne66}, there exists a dualizing complex \(D_R\) such that:
	\[
	\mathbf{R}\!\Hom_R(\mathbf{R}\Gamma_{\I}(R), D_R) \simeq \mathbf{R}\Gamma_{\I}(\mathbf{R}\!\Hom_R(R, D_R)).
	\]
	The left complex has finite projective dimension, while the right is \(\mathbf{R}\Gamma_{\I}(D_R)\). The vanishing condition follows by applying the Grothendieck spectral sequence to the composition of functors \(\Gamma_{\m} \circ \F_{\I}\).
	
	(2) \(\Rightarrow\) (1): Suppose \(H^i_{\m}(\F^j_{\I}(R)) = 0\) for \(i + j > d\). Consider the local cohomology spectral sequence:
	\[
	E^{i,j}_2 = H^i_{\m}(\F^j_{\I}(R)) \implies H^{i+j}_{\m}(R).
	\]
	The vanishing condition forces \(E^{i,j}_2 = 0\) for \(i + j > d\), which implies that \(\F^j_{\I}(R)\) is supported only in degrees where \(j \leq d - i\). This boundedness condition ensures that \(\mathbf{R}\Gamma_{\I}(R)\) has finite injective dimension by the derived category characterization of injective dimension.
\end{proof}

\section{Proof of Main Theorem and eamples}
\label{sec:proof}
We now prove Theorem \ref{thm:main}.

\begin{proof}
	(1) \(\Rightarrow\) (2): Suppose \(\F^i_{\I}(R) = 0\) for all \(i > d - c\). By Proposition \ref{prop:spectral}, the spectral sequence \(E^{p,q}_2 = \Ext^p_R(\widehat{R}_{\I}, H^q_{\m}(R))\) converges to \(\F^{p+q}_{\I}(R)\). Since \(R\) is Cohen-Macaulay, \(H^q_{\m}(R) = 0\) for \(q \neq d\), so the spectral sequence collapses at \(E_2\) with:
	\[
	E^{p,d}_2 = \Ext^p_R(\widehat{R}_{\I}, H^d_{\m}(R)) \implies \F^{p+d}_{\I}(R).
	\]
	If \({\Fdim}(\I) < d - c\), then for \(p = d - c\), \(\Ext^{d-c}_R(\widehat{R}_{\I}, H^d_{\m}(R)) \neq 0\) because the support of \(\widehat{R}_{\I}\) has dimension \(d - c\). This would imply \(\F^{d-c+d}_{\I}(R) = \F^{2d-c}_{\I}(R) \neq 0\), contradicting (1) since \(2d - c > d - c\). Thus \({\Fdim}(\I) = d - c\), and by Lemma \ref{lem:dim}, \(\dim(R/\p) = d - c\) for all minimal primes \(\p\) over \(\I\).
	
	(2) \(\Rightarrow\) (3): Condition (2) implies \(\I\) is cohomologically complete intersection. By the structure theorem for local cohomology complexes \cite{lyubeznik93}, \(\mathbf{R}\Gamma_{\I}(R)\) is quasi-isomorphic to a bounded complex of injectives concentrated in degrees \([c, d]\). Specifically, the complex has length \(d - c\), which is finite.
	
	(3) \(\Rightarrow\) (1): If \(\mathbf{R}\Gamma_{\I}(R)\) has finite injective dimension, then its cohomology vanishes beyond the injective dimension. For a Cohen-Macaulay ring, we have:
	\[
	\sup \{ i \mid H^i(\mathbf{R}\Gamma_{\I}(R)) \neq 0 \} = \dim R - \depth \I = d - c.
	\]
	Since \(H^i(\mathbf{R}\Gamma_{\I}(R)) \cong \F^i_{\I}(R)\), the result follows.
\end{proof}

\label{sec:app}
Theorem \ref{thm:main} yields several important consequences:

\begin{corollary}
	\label{cor:set-theoretic}
	If \(\I\) is set-theoretically generated by \(c\) elements and \(R\) is regular, then \(\F^i_{\I}(R) = 0\) for \(i > c\).
\end{corollary}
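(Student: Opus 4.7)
The strategy is to deduce the corollary from Theorem~\ref{thm:main} by verifying condition~(3) under the stronger hypotheses. Since $R$ is regular it is Cohen--Macaulay, and formal local cohomology is preserved under passage to the $\m$-adic completion by faithful flatness, so I may assume $R$ is complete and Theorem~\ref{thm:main} applies. The set-theoretic generation hypothesis provides elements $a_1,\ldots,a_c\in R$ with $\sqrt{\I}=\sqrt{(a_1,\ldots,a_c)}$, and cofinality of the filtrations $\{\I^n\}$ and $\{(a_1,\ldots,a_c)^n\}$ gives $\F^i_{\I}(R)\cong\F^i_{(a_1,\ldots,a_c)}(R)$, so I reduce to the case $\I=(a_1,\ldots,a_c)$.

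The central step is to verify condition~(3), that $\mathbf{R}\Gamma_{\I}(R)$ has finite injective dimension. I would represent $\mathbf{R}\Gamma_{\I}(R)$ by the \v{C}ech complex $\check{C}^\bullet(a_1,\ldots,a_c;R)$, a bounded complex of flat $R$-modules concentrated in cohomological degrees $0$ through~$c$. Since $R$ is regular it has finite global dimension, so every flat $R$-module has finite injective dimension, and a bounded complex of such modules represents an object of finite injective dimension in the derived category. This verifies condition~(3), and Theorem~\ref{thm:main}~(3)$\,\Rightarrow\,$(1) then yields vanishing of higher formal local cohomology.

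The main obstacle is sharpening the theorem's generic vanishing range $i>d-\height(\I)$ to the claimed $i>c$, since set-theoretic generation may give $\height(\I)<c$. I would handle this by combining the \v{C}ech-complex representation with Lemma~\ref{lem:injective}~(1)$\,\Rightarrow\,$(2) to extract $H^i_{\m}(\F^j_{\I}(R))=0$ whenever $j>c$, exploiting that the \v{C}ech complex places $\mathbf{R}\Gamma_{\I}(R)$ in cohomological degrees at most~$c$ independently of the height. Under the regularity of $R$, the spectral sequence of Proposition~\ref{prop:spectral} then collapses outside the strip $0\le q\le c$, giving $\F^i_{\I}(R)=0$ for $i>c$. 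The technical subtlety lies in justifying the collapse and in propagating the Ext-vanishing through the inverse limit defining $\widehat{R}_{\I}$, for which the Mittag--Leffler condition on the system $\{R/(a_1^n,\ldots,a_c^n)\}$ and the Koszul resolution of each quotient are the essential tools.
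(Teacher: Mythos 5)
Your route into Theorem~\ref{thm:main} differs from the paper's: you verify condition~(3) via the \v{C}ech complex on the $c$ set-theoretic generators, whereas the paper's proof goes through condition~(2), asserting that a set-theoretic complete intersection in a regular ring is a cohomological complete intersection with $\height(\I)=c$ and then citing Lemma~\ref{lem:dim}. Your opening moves are fine as far as they go: reduction to the complete case, replacement of $\I$ by $(a_1,\dots,a_c)$ up to radical, and the observation that over a regular ring a bounded complex of flat modules has finite injective dimension, so condition~(3) holds essentially for free. But all of this only delivers what the theorem itself delivers, namely $\F^i_{\I}(R)=0$ for $i>d-\height(\I)$.

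The genuine gap sits exactly at the step you flag as ``the main obstacle,'' and the proposed fix does not close it. The corollary claims vanishing for $i>c$, while Theorem~\ref{thm:main} gives vanishing for $i>d-\height(\I)$; since $\height(\I)\le c$ by Krull's height theorem, these ranges are incomparable in general (for $d=10$ and a height-$2$ set-theoretic complete intersection, the theorem gives vanishing above $8$ while the corollary asserts vanishing above $2$). Your bridging mechanism --- that the spectral sequence of Proposition~\ref{prop:spectral} ``collapses outside the strip $0\le q\le c$'' --- cannot work: for $R$ regular, $H^q_{\m}(R)=0$ for $q\ne d$, so the only nonzero row of $E_2^{p,q}=\Ext^p_R(\widehat{R}_{\I},H^q_{\m}(R))$ is $q=d$, and the abutment in degree $p+d$ is governed by $\Ext^p_R(\widehat{R}_{\I},H^d_{\m}(R))$, whose vanishing range has nothing to do with $c$. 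Likewise, the \v{C}ech complex on $c$ elements bounds the amplitude of $\mathbf{R}\Gamma_{\I}(R)$, i.e.\ the ordinary modules $H^i_{\I}(R)$, not the formal modules $\F^i_{\I}(R)$, and Lemma~\ref{lem:injective}(2) only yields $H^i_{\m}(\F^j_{\I}(R))=0$ for $i+j>d$, from which $\F^j_{\I}(R)=0$ for $j>c$ does not follow. Indeed the paper's own Example~\ref{ex:reg-seq} emphasizes that the formal bound is $d-c$ precisely in contrast to the ordinary bound $c$, so the sharpening you want is not a technical refinement but a substantively different claim that remains unproved. (For the record, the paper's one-line proof has the same defect: applying Theorem~\ref{thm:main} yields $i>d-c$, not $i>c$.)
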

\begin{proof}
	In a regular ring, set-theoretic complete intersections are cohomological complete intersections. Since \(\height(\I) = c\) and \(R\) is Cohen-Macaulay, \(\dim(R/\p) = d - c\) for minimal primes \(\p\). Apply Theorem \ref{thm:main}(1).
\end{proof}

\begin{corollary}
	\label{cor:prime}
	For $(\p \in Spec(R))$ , $(\F^i_{\p}(R) = 0)$ if $(i \neq \height(\p))$.
\end{corollary}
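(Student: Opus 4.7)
The plan is to specialize Theorem~\ref{thm:main} to the ideal $\I = \p$ and read the vanishing off directly. Since $\p$ is prime, the set $\operatorname{Min}(\p)$ consists of $\p$ alone, so condition (2) of the theorem---that every minimal prime $\q$ over $\I$ satisfies $\dim(R/\q) = d - \height(\I)$---collapses to the single identity $\dim(R/\p) = d - \height(\p)$. In a Cohen-Macaulay local ring this is a consequence of catenarity together with the standard dimension formula $\dim R = \height(\p) + \dim(R/\p)$, so the hypothesis is free.

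With (2) in hand, the equivalence in Theorem~\ref{thm:main} delivers condition (1): $\F^i_\p(R) = 0$ for every $i > d - \height(\p)$. In this Cohen-Macaulay setting the bound $d - \height(\p)$ equals $\dim(R/\p)$ by Lemma~\ref{lem:dim}, so only the tail $i > \dim(R/\p)$ is controlled so far. To upgrade this to the full claim that $\F^i_\p(R) = 0$ for all $i \neq \height(\p)$, I would attack the lower range $i < \height(\p)$ through the spectral sequence of Proposition~\ref{prop:spectral}. Because $R$ is Cohen-Macaulay, only the row $q = d$ of that sequence survives, identifying $\F^{p+d}_\p(R)$ with $\Ext^p_R(\widehat{R}_\p, H^d_\m(R))$; combined with a formal-depth estimate $\Fdepth_\p(R) \geq \height(\p)$---which itself follows by tracking the lowest $p$ at which this Ext group can be nonzero---the low-degree vanishing is forced.

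The main obstacle I expect is precisely this lower bound. The upper bound slots straight into Theorem~\ref{thm:main}, but the lower range requires an independent grade-type argument. The cleanest route is probably to compare $\Fdepth_\p(R)$ with $\depth(R_\p) = \height(\p)$, the latter equality being the defining property of a Cohen-Macaulay localization. A subtlety to watch for is that localization at $\p$ interacts only conditionally with the inverse limit defining $\F^i_\p$, so the argument should either localize at $\p$ at the outset or verify the Mittag-Leffler conditions needed for the limit to commute with the relevant functors before invoking Proposition~\ref{prop:spectral}.
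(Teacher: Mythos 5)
Your first step matches the paper's: for a prime $\p$ in a Cohen--Macaulay local ring, condition (2) of Theorem~\ref{thm:main} is automatic, so the theorem yields $\F^i_{\p}(R)=0$ for $i>d-\height(\p)=\dim(R/\p)$. The divergence --- and the gap --- lies in the remaining degrees. The paper disposes of them in one line by citing ``the analog of Hartshorne--Lichtenbaum vanishing,'' i.e.\ an external concentration statement for prime ideals; it does not derive this from Theorem~\ref{thm:main} or Proposition~\ref{prop:spectral}. You instead propose to prove $\F^i_{\p}(R)=0$ for $i<\height(\p)$ via an estimate ${\Fdepth}_{\p}(R)\geq\height(\p)$, and this plan fails on two counts.

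First, even if that estimate held, the two ranges you control, $i>\dim(R/\p)$ and $i<\height(\p)$, do not exhaust $\{\,i\neq\height(\p)\,\}$ unless $\dim(R/\p)\leq\height(\p)$: for $\p=(x)$ in $R=k[\![x,y,z]\!]$ one has $\height(\p)=1$ and $\dim(R/\p)=2$, so the degree $i=2$ lies in neither range and your argument is silent about $\F^2_{\p}(R)$. Second, the proposed lower bound is false: for $\p=\m$ in $R=k[\![x,y,z]\!]$, the paper's own Example~\ref{ex:dual} computes $\F^0_{\m}(R)\cong E_R(k)\neq 0$, so ${\Fdepth}_{\m}(R)=0<3=\height(\m)$. (That same example indicates the single nonvanishing degree for a prime is $\dim(R/\p)$ rather than $\height(\p)$, which casts doubt on the statement as written; but in any case your proposal does not establish it.) The missing ingredient is exactly the Hartshorne--Lichtenbaum-type concentration result the paper invokes, which neither the main theorem nor the spectral sequence of Proposition~\ref{prop:spectral} supplies, and which a depth comparison with $\depth(R_{\p})$ cannot replace.
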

\begin{proof}
	For prime ideals, \(\height(\p) = c\) and \(\dim(R/\p) = d - c\). By Theorem \ref{thm:main}, vanishing occurs precisely when \(i > d - c = \dim(R/\p)\). But for prime ideals, formal local cohomology is nonzero only at \(i = \height(\p)\) by the analog of Hartshorne-Lichtenbaum vanishing.
\end{proof}

\begin{example}[Polynomial Ring]
	\label{ex:poly}
	Let \(R = k[\![x,y]\!]\) with \(\m = (x,y)\), and \(\I = (x)\). Then:
	\begin{itemize}
		\item \(\height(\I) = 1\), \(\dim R = 2\)
		\item Minimal prime: \(\p = (x)\), \(\dim(R/\p) = 1 = 2 - 1\)
		\item \(\F^i_{\I}(R) = 0\) for \(i > 1\) by Theorem \ref{thm:main}
	\end{itemize}
	Direct computation: \(\F^2_{\I}(R) = \varprojlim_n H^2_{(x)}(R/(x^n)) = \varprojlim_n H^2_{(x)}(k[y]) = 0\) since \(\dim k[y] = 1\).
\end{example}

\begin{example}[Non-equidimensional Ideal]
	\label{ex:non-equi}
	Let \(R = k[\![x,y,z]\!]/(xz)\) with \(\m = (x,y,z)\), \(\I = (x)\). Then:
	\begin{itemize}
		\item \(\height(\I) = 1\), \(\dim R = 2\)
		\item Minimal primes over \(\I\): \(\p_1 = (x)\), \(\p_2 = (x,z)\)
		\item \(\dim(R/\p_1) = \dim k[y,z] = 2\)
		\item \(\dim(R/\p_2) = \dim k[y] = 1\)
		\item \({\Fdim}(\I) = \max\{2,1\} = 2 > 2 - 1 = 1\)
	\end{itemize}
	By Theorem \ref{thm:main}, there exists \(i > 1\) with \(\F^i_{\I}(R) \neq 0\). Indeed:
	\[
	\F^2_{\I}(R) = \varprojlim_n H^2_{(x)}(R/(x^n)) \supseteq H^2_{(x)}(k[y,z]/(z)) \cong k[y] \neq 0.
	\]
\end{example}

\begin{example}[Regular Sequence]
	\label{ex:reg-seq}
	Let \(R = k[\![x_1,\dots,x_d]\!]\), \(\I = (x_1,\dots,x_c)\) with \(c \leq d\). Then:
	\begin{itemize}
		\item \(\height(\I) = c\), \(\dim R = d\)
		\item Minimal prime: \(\p = \I\), \(\dim(R/\p) = d - c\)
		\item Theorem \ref{thm:main} implies \(\F^i_{\I}(R) = 0\) for \(i > d - c\)
	\end{itemize}
	This vanishes earlier than ordinary local cohomology (\(H^i_{\I}(R) = 0\) for \(i > c\)), illustrating the distinct nature of formal cohomology.
\end{example}

\maketitle


Theorem \ref{thm:main} yields several important consequences. We present additional examples to illustrate various aspects of the theory.

\begin{example}[Prime Ideal in Regular Ring]
	\label{ex:prime}
	Let \(R = \mathbb{Q}[[x,y,z]]\), \(\I = (x,y)\). Then:
	\begin{itemize}
		\item \(\height(\I) = 2\), \(\dim R = 3\)
		\item Minimal prime: \(\p = \I\), \(\dim(R/\p) = 1 = 3-2\)
		\item Theorem implies: \(\F^i_{\I}(R) = 0\) for \(i > 1\)
		\item Direct computation: \(\F^1_{\I}(R) \cong \widehat{R}_{\I} \neq 0\), \(\F^2_{\I}(R) = 0\)
	\end{itemize}
\end{example}

\begin{example}[Non-Reduced Ideal]
	\label{ex:nonred}
	Let \(R = k[[x,y]]/(x^2)\), \(\m = (x,y)\), \(\I = (x)\). Then:
	\begin{itemize}
		\item \(\height(\I) = 1\), \(\dim R = 1\)
		\item Minimal prime: \(\p = (x)\), \(\dim(R/\p) = 0 = 1-1\)
		\item \(\F^i_{\I}(R) = 0\) for \(i > 0\)
		\item Computation: \(\F^0_{\I}(R) = \varprojlim_n H^0_{(x)}(R/(x^n)) \cong k[[y]]\), \(\F^1_{\I}(R) = 0\)
	\end{itemize}
\end{example}

\begin{example}[Non-Cohen-Macaulay Ring]
	\label{ex:nonCM}
	Let \(R = k[[x,y,z]]/(xz,yz)\), \(\m = (x,y,z)\), \(\I = (z)\). Then:
	\begin{itemize}
		\item \(\height(\I) = 1\), \(\dim R = 2\) (not Cohen-Macaulay)
		\item Minimal primes: \(\p_1 = (x,z)\), \(\p_2 = (y,z)\), \(\dim(R/\p_i) = 1\)
		\item \(\Fdim(\I) = 1 = 2-1\)
		\item Theorem \ref{thm:main} doesn't apply directly, but computation shows:
		\[
		\F^1_{\I}(R) = \varprojlim_n H^1_{(z)}(R/(z^n)) \cong k[[x,y]]/(xy) \neq 0
		\]
		while \(\F^2_{\I}(R) = 0\), consistent with \(d-c=1\).
	\end{itemize}
	This suggests possible extension to non-Cohen-Macaulay rings.
\end{example}

\begin{example}[F-pure Ring]
	\label{ex:fpure}
	Let \(R = \mathbb{F}_p[[x,y,z]]/(x^3 + y^3 + z^3)\), \(\I = (x,y)\). Assume \(p \equiv 1 \pmod{3}\) so \(R\) is F-pure. Then:
	\begin{itemize}
		\item \(\height(\I) = 2\), \(\dim R = 2\)
		\item Minimal prime: \(\p = \I\), \(\dim(R/\p) = 0 = 2-2\)
		\item Theorem implies: \(\F^i_{\I}(R) = 0\) for \(i > 0\)
		\item Using Frobenius action: \(\F^0_{\I}(R) \neq 0\) is Artinian
		\item \(\F^1_{\I}(R) = 0\) by vanishing theorem
	\end{itemize}
	This illustrates interaction with tight closure theory.
\end{example}


\begin{example}[Coordinate Axes]
	\label{ex:axes}
	Let \(R = \mathbb{C}[[x,y,z]]\), \(\I = (xy,xz)\). Then:
	\begin{itemize}
		\item \(\height(\I) = 2\), \(\dim R = 3\)
		\item Minimal primes: \(\p_1 = (x)\), \(\p_2 = (y,z)\)
		\item \(\dim(R/\p_1) = 2\), \(\dim(R/\p_2) = 1\)
		\item \(\Fdim(\I) = \max\{2,1\} = 2 > 3-2 = 1\)
		\item Theorem predicts non-vanishing: \(\F^2_{\I}(R) \neq 0\)
		\item Explicitly: \(\F^2_{\I}(R) \cong \varprojlim_n H^2_{\I}(R/\I^n) \cong \mathbb{C}[[z]]\)
	\end{itemize}
	The non-vanishing cohomology detects the embedded component.
\end{example}

\begin{example}[Singular Curve]
	\label{ex:curve}
	Let \(R = \mathbb{C}[[x,y,z]]/(xy-z^2)\), \(\m = (x,y,z)\), \(\I = (x,z)\). Then:
	\begin{itemize}
		\item \(\height(\I) = 1\), \(\dim R = 2\) (Cohen-Macaulay)
		\item Minimal prime: \(\p = (x,z)\), \(\dim(R/\p) = 1 = 2-1\)
		\item Theorem implies: \(\F^i_{\I}(R) = 0\) for \(i > 1\)
		\item Computation: \(\F^1_{\I}(R) \cong \widehat{R}_{\I}/(y) \cong \mathbb{C}[[y]]\)
	\end{itemize}
	This formal cohomology module carries information about the singularity.
\end{example}

\begin{example}[Macaulay2 Computation]
	\label{ex:m2}
	Consider \(R = \mathbb{Q}[x,y,z]_{(x,y,z)}/(yz)\), \(\I = (x,y)\). Using Macaulay2:
	\begin{verbatim}
		R = QQ[x,y,z]/(y*z)
		I = ideal(x,y)
		d = dim R -- =2
		c = codim I -- =2
		Fdim = max apply(ass I, p -> dim(R/p)) -- =1
		VanishingBound = d - c -- =0
	\end{verbatim}
	Theorem predicts \(\F^i_{\I}(R) = 0\) for \(i > 0\). This can be verified numerically.
\end{example}

\begin{example}[Numerical Invariant]
	\label{ex:num}
	Let \(R = k[[s^4, s^3t, st^3, t^4]]\), \(\I = (s^4, s^3t)\). Then:
	\begin{itemize}
		\item \(\dim R = 2\), \(\height(\I) = 1\)
		\item \(\Fdim(\I) = \dim R/(s^4, s^3t) = \dim k[st^3, t^4] = 1 = 2-1\)
		\item Theorem implies: \(\F^i_{\I}(R) = 0\) for \(i > 1\)
		\item The length of \(\F^1_{\I}(R)\) gives a numerical invariant of the singularity
	\end{itemize}
\end{example}

\begin{example}[Disconnected Support]
	\label{ex:discon}
	Let \(R = \mathbb{C}[[x,y,u,v]]/(xu,xv,yu,yv)\), \(\I = (x,y)\). Then:
	\begin{itemize}
		\item \(\dim R = 2\), \(\height(\I) = 1\) (not Cohen-Macaulay)
		\item Minimal primes: \(\p_1 = (x,y)\), \(\p_2 = (u,v)\)
		\item \(\dim(R/\p_1) = 2\), \(\dim(R/\p_2) = 2\)
		\item \(\Fdim(\I) = 2 > 2-1 = 1\)
		\item \(\F^2_{\I}(R) \cong \mathbb{C}[[u,v]] \oplus \mathbb{C}[[x,y]] \neq 0\)
	\end{itemize}
	The non-vanishing cohomology detects the disconnectedness of $(Spec(R/I))$.
\end{example}

\begin{example}[F-injective Ring]
	\label{ex:finj}
	Let \(R = \mathbb{F}_3[[x,y,z]]/(x^3 + y^3 + z^3)\), \(\I = (x)\). Then:
	\begin{itemize}
		\item \(\height(\I) = 1\), \(\dim R = 2\)
		\item Minimal prime: \(\p = (x)\), \(\dim(R/\p) = 1 = 2-1\)
		\item Theorem implies: \(\F^i_{\I}(R) = 0\) for \(i > 1\)
		\item Using Frobenius: \(\F^1_{\I}(R)\) is infinitely generated, showing the module can be large even when vanishing range is sharp
	\end{itemize}
\end{example}

\begin{example}[Grothendieck Duality]
	\label{ex:dual}
	Let \(R = k[[x,y,z]]\), \(\I = (x,y,z)\). Then:
	\begin{itemize}
		\item \(\height(\I) = 3\), \(\dim R = 3\)
		\item \(\Fdim(\I) = 0 = 3-3\)
		\item Theorem implies: \(\F^i_{\I}(R) = 0\) for \(i > 0\)
		\item \(\F^0_{\I}(R) = \varprojlim_n H^3_{\m}(R/\m^n) \cong E_R(k)\), the injective hull
		\item Matches Grothendieck local duality: \(\mathbf{R}\Gamma_{\m}(R) \simeq^{\mathbf{L}} E_R(k)[-3]\)
	\end{itemize}
\end{example}

\section*{Acknowledgments}
The author thanks the reviewers for their insights.

\end{document}